\newcommand{\abs}[1]{\left\lvert #1 \right\rvert}  
\keywords{Noncommutative invariants, free probability, 
noncrossing partitions, free cumulants, Pl\"ucker relations,
free stochastic measure, Hilbert-Poincar\'e series, symbolic method}
\subjclass{Primary 16R30, 46L54; Secondary 05A18, 15A72, 16A06}
\DeclareMathOperator{\StochMeas}{\mathrm{St}}                     
\DeclareMathOperator{\ProdMeas}{\mathrm{Pr}}                     
\renewcommand{\phi}{\varphi{}}
\newcommand{\IC}{\mathbf{C}}                     
\newcommand{\IN}{\mathbf{N}}                     
\newcommand{\IR}{\mathbf{R}}                     
\newcommand{\SL}{\mathrm{SL}}                     
\newcommand{\GL}{\mathrm{GL}}                     
\newcommand{\RR}{R}                     
\DeclareMathOperator{\NC}{{\mathrm NC}}                     
\newcommand{\Pol}{\mathbf{P}}                     
\newcommand{\symm}{\phi}                     
\newcommand{\alg}[1]{\mathcal{#1}}                     
\newcommand{\SG}{\mathfrak{S}}                     
\newcommand{\NCsymbol}[2]{\left[#1\ #2\right]} 
  \DeclareRobustCommand\em
\newtheoremstyle{mythm}
  {9pt}
  {9pt}
  {\slshape}
  {0pt}
  {\bfseries}
  {.}
  { }
  {\thmname{#1} \thmnumber{#2}\thmnote{(#3)}}
\theoremstyle{mythm}
\newtheorem{Theorem}{Theorem}[section]
\newtheorem{Lemma}[Theorem]{Lemma}
\newtheorem{Corollary}[Theorem]{Corollary}
\theoremstyle{definition} 
\newtheorem{Definition}[Theorem]{Definition}
\newtheorem{Remark}[Theorem]{Remark}
\newtheorem{Example}[Theorem]{Example}
\numberwithin{equation}{section}
\begin{document}
\title[Noncommutative Invariants]{A Noncrossing Basis for Noncommutative Invariants of $SL(2,\IC)$}
\author{Franz Lehner}
\begin{abstract}
  Noncommutative invariant theory is a generalization of
  the classical invariant theory of the action of $SL(2,\IC)$
  on binary forms. 
  The dimensions of the spaces of invariant noncommutative polynomials
  coincide with the numbers of certain noncrossing partitions.
  We give an elementary combinatorial explanation of this fact by constructing
  a noncrossing basis of the homogeneous components.
  Using the theory free stochastic measures this provides a combinatorial
  proof of the Molien-Weyl formula in this setting.
\end{abstract}
\date{\today}
\maketitle{}

Invariant theory has played a major role in 19th century mathematics.
It has seen a revival in the last decades and one of the recent
generalizations is noncommutative invariant theory.
The study of noncommutative invariants of $\SL(n,\IC)$ 
has been initiated by Almkvist, Dicks, Formanek and Kharchenko
\cite{Kharchenko:1978:algebras,DicksFormanek:1982:Poincare,AlmkvistDicksFormanek:1985:Hilbert},
see \cite{Almkvist:1990:commutative} for a survey.
An approach using Young tableaux was realized by Teranishi 
\cite{Teranishi:1988:noncommutative} and the symbolic method
was adapted from the classical  to the noncommutative setting by 
Tambour~\cite{Tambour:1991:noncommutative}.
The latter provides the ground on which we
establish a natural basis of the noncommutative
invariants which is in bijection with certain noncrossing 
partitions.
It arose after computer experiments and subsequent consulting of
Sloane's database~\cite{Sloane:Encyclopedia}. 
This bijection is applied to provide a combinatorial proof of
the Molien-Weyl integral formula for the Hilbert-Poincar\'e series
in this setting, using free cumulants and free stochastic measures.

This note is organized as follows.
In Section~\ref{sec:outline} we give a short survey
of invariant theory and the statement of the problem.
In Section~\ref{sec:freeprobability} we review a few facts
from free probability theory and noncrossing partitions.
In Section~\ref{sec:noncrossing} we explain the symbolic method
and construct the noncrossing basis announced in the title.
In Section~\ref{sec:freestochastic} we review the necessary
combinatorial aspects of free stochastic measures
and conclude by a proof of the Molien-Weyl formula
using the newly found noncrossing basis.

\section{An Outline of Invariant Theory}
\label{sec:outline}
\subsection{Introduction}

Let $X$ be a set and $G$ a group acting on $X$ from the left.
Consider a class $\alg{A}$ of functions
$f:X\to Y$, usually an algebra or at least a vector space,
on which the induced action of $G$
\begin{equation}
  \label{eq:inducedaction}
  (g\cdot f)(x) = f(g^{-1}x)
\end{equation}
makes sense.
The objects of \emph{invariant theory} are the fixed point sets
$$
\alg{A}^G = \{ f\in \alg{A} : (g\cdot f)=f \;\forall g\in G\}
$$
of such actions.
\begin{Example}
  A favourite example is provided by quadratic polynomials
  and the group $G$ of translations of the real axis $\IR$:
  $$
  g_s:x\mapsto x+s
  .
  $$
  Denote $X=\IR_2[x]=\{a_0+a_1x+a_2x^2 : a_0,a_1,a_2\in \IR\}$ 
  the space of polynomials of degree~$2$
  with the action of $G$
  $$
  (g_s\cdot p)(x) = p(x-s) = a_0-a_1s+a_2s^2+(a_1-2a_2s)x+a_2x^2
  .
  $$
  Now one may ask which properties of a polynomial $p=a_0+a_1x+a_2x^2$
  do not change under translation.
  One significant parameter is the number of distinct real roots of a quadratic
  polynomial,   and the three possibilities are distinguished by
  the sign of the \emph{discriminant} 
  $$
  \Delta = a_1^2-4a_0a_2
  $$
  and the latter is indeed invariant under the action of $G$.
  Moreover, it is in some sense the only invariant of $G$:
  if $\alg{A}=\Pol(\IR_2[x])$ is the algebra of polynomials over
  $\IR_2[x]$ (i.e., the polynomials in the coefficients $a_0$, $a_1$, $a_2$)
  then 
  $\alg{A}^G$ is the subalgebra generated by $\Delta$ and $\Delta$ is 
  the only ``simple'' invariant.
\end{Example}
Returning to the general case, 
if $\alg{A}$ is graded 
$$
\alg{A}=\bigoplus_{n\geq 0} \alg{A}_n
$$
with $\dim \alg{A}_n<\infty$ then one is interested in the dimensions
$d_n=\dim\alg{A}_n^G$. These are collected in the \emph{Hilbert-Poincar\'e series}
$$
H(\alg{A}^G;z) = \sum_{n=0}^\infty d_n z^n
$$

\subsection{Notation}
Before proceeding to the invariants of interest let us fix some notation.
We are going to consider matrix groups with their actions on certain vector spaces.
Let $V$ be a (complex) vector space. As usual $V^*$ denotes
the space of linear functionals $v^*:V\to \IC$ and there is a natural
dual pairing $\langle v^*,w\rangle = v^*(w)$ on $V^*\times V$.

The standard action of $G$ on $V=\IC^2$ induces a dual action on $V^*$ 
via \eqref{eq:inducedaction}, namely
$$
\langle g\cdot v^*,v\rangle = \langle v^*, g^{-1} \cdot v\rangle 
$$
i.e., by the invariance requirement 
$$
\langle g\cdot v^*, g\cdot v\rangle =\langle v^*,v\rangle
.
$$
Next we induce the action on $V^{*m}\times V^n$ by setting
$$
g\cdot (v_1^*,v_2^*,\dots,v_m^*,w_1,w_2,\dots,w_n) = 
 (g\cdot v_1^*,g\cdot v_2^*,\dots,g\cdot v_m^*,g\cdot w_1,g\cdot w_2,\dots,g\cdot w_n)
$$
Then for example, on $V^*\times V$, the map
\begin{equation}
  \label{eq:fVxVtoC}
\begin{aligned}
f:V^*\times V&\to \IC\\
(v^*,w) &\mapsto \langle v^*,w\rangle 
\end{aligned}
\end{equation}
is invariant under the action and similarly for any $m\in\IN$  the map
\begin{align*}
  V^{*m}\times V^m &\to \IC\\
  (v_1^*,v_2^*,\dots,v_m^*,w_1,w_2,\dots,w_m)
  &\mapsto \langle v_1^*,w_1\rangle \langle v_2^*,w_2\rangle \dotsm\langle v_m^*,w_m\rangle 
\end{align*}
In fact, for $V=\IC^n$ these are the only multilinear functions
which are invariant under the canonical action of
$\GL(n,\IC)$, but for $\SL(n,\IC)$ there are more as we shall see below.

The space of $d$-linear functionals $f:V^d\to \IC$ will be identified
with the $d$-fold tensor product $T^d(V^*)=V^*\otimes\dots\otimes V^*$.
We denote by $S^d(V^*)$ the subspace of symmetric $d$-linear forms,
i.e., the $d$-linear functionals which are invariant under permutation
of the arguments. This space can be identified with the space
of $d$-homogeneous polynomials on $V$ as follows.
First note that a symmetric $d$-linear form $f$ is completely determined
by the values of the $d$-homogeneous map $\tilde{f}(v)=f(v,v,\dots,v)$, 
because the other values can be obtained by \emph{polarization}:
$$
f(v_1,v_2,\dots,v_d) = 
\sum_{I\subseteq \{1,\dots,d\}} (-1)^{d-\abs{I}} \tilde{f}(\sum_{i\in I} v_i)
.
$$
Now if we choose a basis $e_1,\dots,e_n$ of $V$ and denote
$x_1,\dots,x_n$ the dual basis of $V^*$, 
then $S^d(V^*)$ is the linear span
of the monomials $x_{k_1,\dots,k_n}=x_1^{k_1}x_2^{k_2}\dots x_n^{k_n}$ with
$\sum k_i=d$ where 
$$
\tilde{x}_{k_1,\dots,k_n}(v) = \langle x_1,v \rangle^{k_1} \dotsm \langle x_n,v \rangle^{k_n} 
$$

\subsection{Classical Invariant Theory}

In the present paper we are interested in certain invariants of
$G=\SL(2,\IC)$, which acts on $V=\IC^2$ by left multiplication.
\emph{Classical invariant theory} is interested in the invariants of
the space $\RR_d$ of $d$-homogeneous polynomials on $V$,
which are called \emph{binary forms} of degree $d$.
Denoting the standard basis vectors of the dual space $V^*$ by $X$ and $Y$,
this space can be written as
$$
\RR_d=
\bigl\{
  \sum_{k=0}^d \binom{d}{k} \xi_k X^kY^{d-k} : \xi_k\in \IC
\bigr\}
$$
and it is isomorphic to $S^d(V^*)$, the $d$-fold symmetric tensor product
of $V^*$. The object of classical invariant theory are the 
polynomials in the coefficients $\xi_0,\xi_1,\dots,\xi_d$ which are
invariant under the action of $\SL(2,\IC)$, i.e., the space
$$
\bigoplus_{m\geq0} S^m(\RR_d^*)^{\SL(2,\IC)}
.
$$
Similarly, \emph{noncommutative invariant theory} is interested
in the invariant noncommutative polynomials, i.e., the
invariant elements of the full tensor algebra
$$
\bigoplus_{m\geq0} T^m(\RR_d^*)
.
$$
Indeed the $m$-fold tensor product $T^m(\RR_d^*)$ can be identified
with the space of $m$-linear forms on $\RR_d$ as follows:
Denote by $a_0,a_1,\dots,a_d$ the canonical basis of $\RR_d^*$,
i.e.,
$$
\langle
  a_k, \sum_{j=0}^d \binom{d}{j} \xi_j X^j Y^{d-j}
\rangle 
 = \xi_k
$$
Then the space of $m$-linear forms 
$T^m(\RR_d^*)$ is spanned by the non-commuting monomials
\begin{equation}
\label{eq:defmonomials}
\begin{split}
a_{k_1}a_{k_2}\dotsm a_{k_m} 
\biggl( 
  \sum_{j=0}^d \binom{d}{j} \xi_{1j} X^j Y^{d-j},
  \sum_{j=0}^d \binom{d}{j} \xi_{2j} X^j Y^{d-j},
  \dots,
  \sum_{j=0}^d \binom{d}{j} \xi_{mj} X^j Y^{d-j}
\biggr) \\
= \xi_{1k_1}\xi_{2k_2}\dotsm \xi_{mk_m}
\end{split}
\end{equation}
and we want to determine the space $T^m(\RR_d^*)^G$ 
of noncommutative polynomials
which are invariant under the action of $G=\SL(2,\IC)$ on $\RR_d$.

\subsection{The fundamental theorems}

Let us now take a closer look at the actions of $G=\SL(2,\IC)$ on $V=\IC^2$ 
and its dual.
There are more invariant
functions than for $\GL(2,\IC)$.
Denoting the standard basis vectors of $V$ by $e_1$ and $e_2$
and decomposing $v_i=\eta_{i1}e_1+\eta_{i2}e_2$ we can define 
another invariant function, namely the \emph{bracket} 
\begin{align*}
  V\times V&\to \IC\\
  (v_1,v_2)&\mapsto \NCsymbol{v_1}{v_2}:= \det
  \begin{bmatrix}
    \eta_{11} & \eta_{21}\\
    \eta_{12} & \eta_{22}
  \end{bmatrix}
\end{align*}
This function is indeed invariant, because
\begin{align*}
  \NCsymbol{g\cdot v_1}{g\cdot v_2}
  &= \det
     \left(
       g\cdot 
       \begin{bmatrix}
         \eta_{11} & \eta_{21}\\
         \eta_{12} & \eta_{22}
       \end{bmatrix}
     \right) \\
  &= \det g \,\det 
     \begin{bmatrix}
       \eta_{11} & \eta_{21}\\
       \eta_{12} & \eta_{22}
     \end{bmatrix}
\\
  &= \NCsymbol{v_1}{v_2}
\end{align*}
Similarly one can define a determinant on $V^*\times V^*$.
The first fundamental theorem states that
these together with \eqref{eq:fVxVtoC}
are \emph{all} the invariant functions.
\begin{Theorem}[First Fundamental Theorem]
  Every $\SL(2,\IC)$-invariant multilinear function
  $f:V^{*m}\times V^n\to \IC$ is a linear combination of products of
  the functions
  \begin{equation}
    \label{eq:invariants:generators}
    \langle v^*,w\rangle 
    \qquad \NCsymbol{v_1^*}{v_2^*} 
    \qquad \NCsymbol{w_1}{w_2}
  \end{equation}
\end{Theorem}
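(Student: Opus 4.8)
The plan is to deduce the $\SL(2,\IC)$ statement from the first fundamental theorem for $\GL(2,\IC)$ quoted above (every multilinear $\GL(n,\IC)$-invariant of $V^{*m}\times V^n$ is a linear combination of products of pairings $\langle v^*_i,w_j\rangle$, which in particular forces $m=n$). The bridge is the classical fact that a multilinear $\SL(2,\IC)$-invariant is automatically a $\GL(2,\IC)$-\emph{semi}invariant. Indeed, every $g\in\GL(2,\IC)$ factors as $g=(tI)\,h$ with $t^2=\det g$ and $h\in\SL(2,\IC)$, and the scalar matrix $tI$ acts on any multilinear $f:V^{*m}\times V^n\to\IC$ by multiplication by $t^{m-n}$; hence for $\SL(2,\IC)$-invariant $f$ one gets $g\cdot f=(\det g)^{(m-n)/2}f$. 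This already shows $f=0$ when $m-n$ is odd (so the theorem holds trivially then), and otherwise exhibits $f$ as a semiinvariant of $\det$-weight $p:=(m-n)/2$. By the symmetry between $V$ and $V^*$ I may assume $m\ge n$, so $p\ge0$.

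The next step is to ``saturate'' $f$ into a genuine invariant. Introducing $2p$ fresh arguments $u_1,\dots,u_{2p}\in V$, set
$$
\tilde f(v^*_1,\dots,v^*_m,w_1,\dots,w_n,u_1,\dots,u_{2p})
 = f(v^*_1,\dots,v^*_m,w_1,\dots,w_n)\,\prod_{k=1}^{p}\NCsymbol{u_{2k-1}}{u_{2k}}.
$$
Since the bracket $\NCsymbol{w_1}{w_2}$ transforms by $(\det g)^{-1}$, this $\tilde f$ is a genuine $\GL(2,\IC)$-invariant, multilinear in $m$ arguments from $V^*$ and $n+2p=m$ arguments from $V$. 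The $\GL$ first fundamental theorem then gives
$$
\tilde f=\sum_{\sigma}c_\sigma\prod_{i=1}^{m}\langle v^*_i,z_{\sigma(i)}\rangle,
\qquad(z_1,\dots,z_m)=(w_1,\dots,w_n,u_1,\dots,u_{2p}),
$$
the sum running over bijections $\sigma$ of $\{1,\dots,m\}$.

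To recover $f$, I would exploit that $\tilde f$ is alternating in each pair $(u_{2k-1},u_{2k})$: applying the antisymmetrizer $\prod_k\tfrac12(1-\tau_k)$, where $\tau_k$ swaps the $k$-th fresh pair, fixes $\tilde f$, while on each monomial on the right it replaces a factor $\langle v^*_a,u_{2k-1}\rangle\langle v^*_b,u_{2k}\rangle$ by $\tfrac12\NCsymbol{v^*_a}{v^*_b}\,\NCsymbol{u_{2k-1}}{u_{2k}}$ via the two-dimensional Laplace identity $\langle v^*,u\rangle\langle w^*,u'\rangle-\langle v^*,u'\rangle\langle w^*,u\rangle=\NCsymbol{v^*}{w^*}\,\NCsymbol{u}{u'}$. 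This rewrites the right-hand side as $P(v^*_\bullet,w_\bullet)\,\prod_{k}\NCsymbol{u_{2k-1}}{u_{2k}}$ with $P$ a linear combination of products of the functions in \eqref{eq:invariants:generators}: each of the $n$ ``old'' $v^*$'s is paired with some $w_j$, and the remaining $2p$ are grouped into $p$ brackets $\NCsymbol{v^*_a}{v^*_b}$. Comparing with the definition of $\tilde f$ and cancelling the nonzero polynomial $\prod_k\NCsymbol{u_{2k-1}}{u_{2k}}$ in the ring of polynomial (coordinate) functions of all the arguments, which is an integral domain, yields $f=P$. The case $m<n$ is handled symmetrically: one saturates instead with star-brackets $\NCsymbol{u^*_{2k-1}}{u^*_{2k}}$, which carry $\det$-weight $+1$, and the antisymmetrization produces brackets $\NCsymbol{w_a}{w_b}$ in place of the $\NCsymbol{v^*_a}{v^*_b}$.

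I expect the main work to be bookkeeping rather than a genuine obstacle: one must verify that simultaneously antisymmetrizing over all $p$ fresh pairs turns every $\GL$-monomial into exactly one bracket $\NCsymbol{v^*_a}{v^*_b}$ per pair times the common factor $\prod_k\NCsymbol{u_{2k-1}}{u_{2k}}$, with every leftover leg assembling into a legitimate product of \eqref{eq:invariants:generators}; checking $\GL(2,\IC)=\SL(2,\IC)\cdot\{tI\}$ and the resulting semiinvariance is routine. A conceptual alternative that sidesteps the semiinvariant bookkeeping would be to transport everything through the $\SL(2,\IC)$-equivariant isomorphism $V\cong V^*$, $v\mapsto\NCsymbol{v}{\cdot}$, reducing to the description of the $\SL(2,\IC)$-invariants in $V^{\otimes(m+n)}$ and invoking the $\GL$ theorem there; but the route above has the advantage of delivering the conclusion literally in the stated form.
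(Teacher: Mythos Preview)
The paper does not prove this statement: the First Fundamental Theorem is quoted there as classical background, with no argument supplied, and is then used as input for the symbolic method in Section~\ref{sec:noncrossing}. So there is no ``paper's own proof'' to compare against.

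That said, your reduction to the $\GL(2,\IC)$ first fundamental theorem is a standard and correct route. The semiinvariance computation is right (for $g=tI$ one has $g^{-1}\cdot v^*=t\,v^*$ and $g^{-1}\cdot w=t^{-1}w$, giving $g\cdot f=t^{m-n}f$), the parity argument disposing of odd $m-n$ is fine, and the saturation $\tilde f=f\prod_k\NCsymbol{u_{2k-1}}{u_{2k}}$ indeed has total $\det$-weight zero. The key step---applying $\prod_k\tfrac12(1-\tau_k)$ and invoking the two-dimensional identity $\langle v^*,u\rangle\langle w^*,u'\rangle-\langle v^*,u'\rangle\langle w^*,u\rangle=\NCsymbol{v^*}{w^*}\NCsymbol{u}{u'}$---works exactly as you say, because in each $\GL$-monomial every $u_j$ occurs in precisely one pairing factor, so the antisymmetrizers act independently and factor out $\prod_k\NCsymbol{u_{2k-1}}{u_{2k}}$ cleanly. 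Cancellation in the polynomial ring is legitimate. One small wording slip: it is not ``the $n$ old $v^*$'s'' but rather $n$ of the $m$ covectors that end up paired with the $w_j$; the remaining $2p$ covectors assemble into the $p$ brackets $\NCsymbol{v^*_a}{v^*_b}$. With that corrected the bookkeeping you flag as ``the main work'' goes through without difficulty.
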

The functions~\eqref{eq:invariants:generators} are not independent from each
other, they satisfy certain relations, called \emph{syzygies}:
\begin{align}
  \label{eq:invariants:syzygies1}
  \NCsymbol{v_1}{v_2} &= -\NCsymbol{v_2}{v_1} \\
  \label{eq:invariants:syzygies2}
  \NCsymbol{v_1}{v_2} \NCsymbol{v_3}{v_4} &= \NCsymbol{v_1}{v_3} \NCsymbol{v_2}{v_4} - \NCsymbol{v_1}{v_4}\NCsymbol{v_2}{v_3}
\end{align}
The identity~\eqref{eq:invariants:syzygies2} is called
\emph{Pl\"ucker relation}.
The second fundamental theorem states that these are the only relations.
\begin{Theorem}[Second Fundamental Theorem]
  The algebra of invariant polynomials on $V^{*m}\times V^n$
  is isomorphic to the free algebra generated by the
  functions~\eqref{eq:invariants:generators} modulo the relations 
  \eqref{eq:invariants:syzygies1} and 
  \eqref{eq:invariants:syzygies2} (and their analogs on $V^*\times V^*$).
\end{Theorem}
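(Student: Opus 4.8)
The plan is to reduce the Second Fundamental Theorem to the classical second fundamental theorem for the Grassmannian $\mathrm{Gr}(2,N)$ and to prove the latter by a straightening (standard monomial) argument. \emph{Reduction to a single species of vectors.} Since $\det g=1$ for $g\in\SL(2,\IC)$, the bracket on $V$ is $\SL(2,\IC)$-invariant, so $w\mapsto\NCsymbol{w}{\cdot}$ is an $\SL(2,\IC)$-equivariant isomorphism $V\to V^*$ that carries the bracket on $V^*$ to the bracket on $V$. Applying it to the arguments $v_1^*,\dots,v_m^*$ turns an element of $V^{*m}\times V^n$ into an $N$-tuple $(u_1,\dots,u_N)\in V^N$ with $N=m+n$, and under this identification each of $\langle v_i^*,w_j\rangle$, $\NCsymbol{v_i^*}{v_j^*}$ and $\NCsymbol{w_i}{w_j}$ becomes a bracket $\NCsymbol{u_i}{u_j}$. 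The relation \eqref{eq:invariants:syzygies1} and its covector analog become $\NCsymbol{u_i}{u_j}=-\NCsymbol{u_j}{u_i}$, while \eqref{eq:invariants:syzygies2}, its covector analog, and the mixed identities such as $\langle v^*,w_1\rangle\NCsymbol{w_2}{w_3}-\langle v^*,w_2\rangle\NCsymbol{w_1}{w_3}+\langle v^*,w_3\rangle\NCsymbol{w_1}{w_2}=0$ all become instances of the single family of Pl\"ucker quadrics among the $\NCsymbol{u_i}{u_j}$. It therefore suffices to show that the surjection $\IC[x_{ij}:1\le i<j\le N]\to\IC[u_1,\dots,u_N]^{\SL(2,\IC)}$, $x_{ij}\mapsto\NCsymbol{u_i}{u_j}$, provided by the First Fundamental Theorem, has kernel equal to the ideal $J$ generated by the Pl\"ucker quadrics $x_{ij}x_{kl}-x_{ik}x_{jl}+x_{il}x_{jk}$ with $i<j<k<l$.

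\emph{Straightening.} Call a monomial $x_{i_1j_1}\dotsm x_{i_rj_r}$ \emph{standard} if, after reordering the factors, $i_1\le\dots\le i_r$, $j_1\le\dots\le j_r$ and $i_s<j_s$ for all $s$; these are exactly the products of brackets indexed by two-rowed semistandard tableaux. Order monomials lexicographically by their column-reading words $i_1\,j_1\,i_2\,j_2\dotsm$. If a monomial contains a nonstandard pair $x_{ab}x_{cd}$, the Pl\"ucker relation on the four indices involved rewrites that pair modulo $J$ as a $\IC$-linear combination of two lexicographically larger monomials (the ``sorted'' one and one further term). Because each degree contains only finitely many monomials in the $x_{ij}$, iterating this terminates and expresses every monomial, modulo $J$, as a $\IC$-linear combination of standard monomials; in particular the standard monomials span $\IC[x_{ij}]/J$, and their images span $\IC[u_1,\dots,u_N]^{\SL(2,\IC)}$.

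\emph{Independence of the images, and the main obstacle.} The crucial point—and the step I expect to be the real work—is that the images of the standard monomials are linearly independent in $\IC[u_1,\dots,u_N]^{\SL(2,\IC)}$; the straightening above is, by contrast, a purely mechanical and provably terminating rewriting. For independence I would equip $\IC[u_{ik}]$ with the lexicographic order in which $u_{11}>u_{21}>\dots>u_{N1}>u_{12}>\dots>u_{N2}$; then the leading monomial of $\NCsymbol{u_i}{u_j}$ with $i<j$ is $u_{i1}u_{j2}$, and since the order is multiplicative the leading monomial of a standard monomial $\prod_{s}\NCsymbol{u_{i_s}}{u_{j_s}}$ is $\bigl(\prod_s u_{i_s1}\bigr)\bigl(\prod_s u_{j_s2}\bigr)$. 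From this monomial one recovers the multisets $\{i_s\}$ and $\{j_s\}$, hence the two sorted rows of the underlying tableau, hence the tableau itself; thus distinct standard monomials have distinct nonzero leading monomials, and no nontrivial linear combination of their images can vanish. Combined with the spanning statement, this forces the surjection of the first paragraph to be a degreewise isomorphism, so $J$ is the full ideal of relations, and transporting back through the symplectic identification gives the theorem. An alternative route to the same independence is geometric: the Pl\"ucker quadrics cut out the irreducible, reduced affine cone over $\mathrm{Gr}(2,N)$, of dimension $2N-3$, which equals $\dim\IC[u_1,\dots,u_N]^{\SL(2,\IC)}$. In this Grassmannian form the statement is classical and could also simply be quoted from the literature.
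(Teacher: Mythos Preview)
The paper does not prove this theorem: it is quoted in Section~1.4 as a classical result, part of the background survey, with no argument supplied. There is therefore nothing in the paper to compare your proposal against.

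Your proposal is the standard straightening (standard-monomial) proof and is essentially sound. The symplectic identification $V\cong V^*$ correctly reduces everything to brackets on $V^N$; you are right that mixed syzygies such as $\langle v^*,w_1\rangle\NCsymbol{w_2}{w_3}-\langle v^*,w_2\rangle\NCsymbol{w_1}{w_3}+\langle v^*,w_3\rangle\NCsymbol{w_1}{w_2}=0$ must be accounted for---the paper's phrase ``and their analogs on $V^*\times V^*$'' glosses over them---and after the reduction they all become ordinary Pl\"ucker quadrics. Your leading-term argument for the linear independence of standard bracket monomials is correct: the lex-leading monomial $\prod_s u_{i_s1}\prod_s u_{j_s2}$ recovers the two sorted rows and hence the semistandard tableau. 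One slip: in the straightening step the Pl\"ucker rewrite produces monomials that are lexicographically \emph{smaller}, not larger, in the column-reading order you chose (for $a<c<d<b$ one has $x_{ab}x_{cd}=x_{ad}x_{cb}-x_{ac}x_{db}$, and both $a\,d\,c\,b$ and $a\,c\,d\,b$ precede $a\,b\,c\,d$ at position~$2$); the same inequality persists inside longer products after re-sorting the first components, so the induction is downward rather than upward, but termination and the spanning conclusion are unaffected.
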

We have thus a complete classification of the invariant functions
on $V^{*m}\times V^n$ and the so-called \emph{symbolic method}
provides a means to reduce other spaces to this one.

\section{Free Probability}
\label{sec:freeprobability}
Free probability was invented by Voiculescu~\cite{Voiculescu:1985:symmetries}
as a means to study the von Neumann algebras of free groups,
see~\cite{VoiculescuDykemaNica:1992:free}.
For our purpose the combinatorial approach of R.~Speicher is appropriate,
see the lectures~\cite{NicaSpeicher:2006:lectures}
for information beyond the following short survey.

The basic notion of free probability is a
\emph{noncommutative probability space}
$(\alg{A},\phi)$ which consists of a unital $C^*$-algebra $\alg{A}$ and a faithful state
$\phi$ (i.e., a linear functional $\phi:\alg{A}\to\IC$ with the properties $\phi(I)=1$ and
$\phi(X^*X)\geq0$; faithfulness means that $\phi(X^*X)=0$ if and only if $X=0$).
The elements of $\alg{A}$ are called \emph{noncommutative random variables}.
This definition follows the general strategy of noncommutative geometry to replace
commutative algebras of functions by more general noncommutative ones.
In this case the commutative von Neumann algebra $L^\infty(\Omega,\alg{F},\mu)$ 
of bounded measurable functions associated to a probability space $(\Omega,\alg{F},\mu)$
provides the motivating example.
We call \emph{distribution} of a noncommutative random variable $X$ 
the collection of its moments
$$
\phi(X^{k_1}X^{*k_1}X^{k_2}X^{*k_2}\dotsm X^{k_m}X^{*k_m})
.
$$
When considering a bounded selfadjoint random variable $X$, the sequence
of moments $\phi(X^k)$, $k=1,2,\dots$ uniquely determines a probability
measure $\mu_X$ on the spectrum of $X$, which is called the
\emph{(spectral) distribution} of $X$ and satisfies
$$
\phi(X^k)=\int t^k\,d\mu_X(t)
$$
for all $k\in\IN$.
There are various notions of noncommutative independence, and
\emph{free independence} or \emph{freeness} is the most successful so far.
\begin{Definition}
  \label{def:freeness}
  Given a noncommutative probability space~$(\alg{A},\phi)$, the subalgebras
  $\alg{A}_i\subseteq \alg{A}$ are called \emph{free} if
  $$
  \phi(X_1X_2\dots X_n)=0
  $$
  whenever $X_j\in \alg{A}_{i_j}$ with $\phi(X_j)=0$ and $i_j\ne i_{j+1}$
  for $j=1,2,\dots,n-1$.
\end{Definition}
Free probability shares a lot of features from classical probability.
There is for example a central limit theorem which can be formulated
exactly like the classical one and the limit distribution is Wigner's
semicircle law
$$
d\mu(t) = \frac{1}{2\pi} \sqrt{4-t^2}\,dt
.
$$
The \emph{free convolution} of two measures $\mu$ and $\nu$,
denoted $\mu\boxplus\nu$, which is the distribution of the sum of
two free random variables $X$ and $Y$ with spectral distributions
$\mu_X=\mu$ and $\mu_Y=\nu$. 
This operation is well defined because 
it can be shown that the distribution
of $X+Y$ only depends on the distributions $\mu_X$ and $\nu_Y$
and not on the particular realizations of $X$ and $Y$.

Correspondingly, a probability measure $\mu$ is called \emph{free
infinite divisible} if for every $n$ there exists a measure
$\mu_n$, such that $\mu=\mu_n\boxplus\mu_n\boxplus\dotsm\boxplus\mu_n$
($n$-fold convolution). A random variable $X$ is called
free infinite divisible if its spectral distribution $\mu_X$ has
this property.

To compute the free convolution,
the r\^ole of the \emph{characteristic function} of a random variable
is played by Voiculescu's \emph{$R$-transform}, but for our purposes
we chose Speicher's cumulant approach to freeness.

\subsection{Noncrossing Partitions}
\begin{Definition}
  Denote $\Pi_n$ the set of partitions
  $\pi=\{B_1,B_2,\dots,B_k\}$ 
  of the set $[n]=\{1,2,\dots,n\}$.
  Equivalently, a partition $\pi$ can be defined by
  the equivalence relation $\sim_\pi$ on $[n]$ whose
  equivalence classes are the blocks $B_j$ of $\pi$, i.e.,
  $$
  i \sim_\pi j 
  \iff
  \text{$i$ and $j$ belong to the same block of $\pi$}
  .
  $$
  A \emph{crossing} of $\pi$ is a quadruple $i<i'<j<j'$
  such that $i\sim_\pi j$, $i'\sim_\pi j'$ and $i\not\sim_\pi i'$.
  A partition $\pi$ is called 
  \emph{noncrossing} if it has no crossings.
  We represent partitions by diagrams as shown in Fig.~\ref{fig:partitions}.
  \begin{figure}
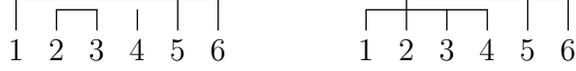

    \centering
\begin{asy}
  size(0.3*textwidth);

  label("1",(1,0),S);
  label("2",(2,0),S);
  label("3",(3,0),S);
  label("4",(4,0),S);
  label("5",(5,0),S);
  label("6",(6,0),S);

  draw((1,0)--(1,h1));
  draw((2,0)--(2,h0));
  draw((3,0)--(3,h0));
  draw((4,0)--(4,h0));
  draw((5,0)--(5,h1));
  draw((6,0)--(6,h1));

  draw((1,h1)--(6,h1));

  draw((2,h0)--(3,h0));
\end{asy}
\qquad
\begin{asy}
  size(0.3*textwidth);

  label("1",(1,0),S);
  label("2",(2,0),S);
  label("3",(3,0),S);
  label("4",(4,0),S);
  label("5",(5,0),S);
  label("6",(6,0),S);

  draw((1,0)--(1,h0));
  draw((2,0)--(2,h1));
  draw((3,0)--(3,h0));
  draw((4,0)--(4,h0));
  draw((5,0)--(5,h1));
  draw((6,0)--(6,h1));

  draw((2,h1)--(6,h1));

  draw((1,h0)--(4,h0));
\end{asy}
    \caption{The partitions $\{\{1,5,6\},\{2,3\},\{4\} \}$ and $\{\{1,3,4\},\{2,5,6\}\}$}
    \label{fig:partitions}
  \end{figure}
  Thus a partition is noncrossing if and only if its diagram can be drawn
  with no intersecting lines.
  We denote $\NC(n)$ the set of noncrossing partitions of the $n$-element set $[n]$.
  Equipped with the refinement order it is a lattice with minimal element
  $\hat{0}_n = \{\{1\},\{2\},\dots,\{n\}\}$
  and maximal element
  $\hat{1}_n = \{\{1,2,\dots,n\}\}$.
  We denote the lattice operations as usual $\pi\wedge\rho$ 
  and $\pi\vee\rho$.
  For a function $h:[n]\to A$ where $A$ is an arbitrary set, we denote
  $\ker h$ the partition of $[n]$ induced by the level sets of $h$, i.e.,
  the equivalence relation $i\sim j\iff h(i)=h(j)$.
\end{Definition}
Noncrossing partitions are enumerated by the ubiquitous \emph{Catalan numbers}
$$
\abs{\NC(n)} = C_n = \frac{1}{n+1}\binom{2n}{n}
.
$$
The M\"obius function is given by Catalan numbers as well,
$$
\mu(\hat{0}_n,\hat{1}_n) = (-1)^{n-1} C_{n-1}
.
$$

\subsection{Free Cumulants}
Given a noncrossing partition $\pi=\{B_1,B_2,\dots,B_k\}\in \NC(n)$
and random variables $X_1,X_2,\dots,X_n$ in some noncommutative 
probability space $(\alg{A},\phi)$, we define the partitioned
expectation
\begin{equation}
  \label{eq:partitionedexpectation}
  \phi_\pi(X_1,X_2,\dots,X_n) = \prod_{B\in\pi} \phi_B(X_1,X_2,\dots,X_n)
\end{equation}
where for a subset $B\subseteq [n]$ we denote the ordered partial moments
$$
\phi_B(X_1,X_2,\dots,X_n)  = \phi(\overrightarrow{\prod_{i\in B}} X_i)
$$
Following Speicher
\cite{Speicher:1994:multiplicative,NicaSpeicher:2006:lecture} we define 
the \emph{free cumulants} $C_n$ by the requirement
$$
\phi_\pi(X_1,X_2,\dots,X_n) = \sum_{\substack{\sigma\in\NC(n) \\
    \sigma\leq\pi}} C_\pi(X_1,X_2,\dots,X_n)
$$
where
$$
C_\pi(X_1,X_2,\dots,X_n) = \prod_{B\in\pi} C_B(X_1,X_2,\dots,X_n)
$$
similar to~\eqref{eq:partitionedexpectation}.
If we consider a single random variable, we write
$$
C_n(X) = C_n(X,X,\dots,X)
.
$$
By M\"obius inversion, this is equivalent to defining
$$
C_\pi(X_1,X_2,\dots,X_n) = \sum_{\substack{\sigma\in\NC(n) \\
    \sigma\leq\pi}} \phi_\pi(X_1,X_2,\dots,X_n)\,\mu(\sigma,\pi)
$$
Speicher~\cite{Speicher:1994:multiplicative} discovered that freeness is equivalent to the vanishing
of mixed cumulants, i.e.,
in the notation of Definition~\ref{def:freeness}
$$
C_n(X_1,X_2,\dots,X_n) = 0
$$
whenever $X_j\in \alg{A}_{i_j}$, $n\geq2$ and at least two $i_j$ are different;
see also \cite{Lehner:2004:cumulants1} 
for an explanation why noncrossing partitions appear.
We are going to deal with identically distributed free random variables
and apply the above formalism in the following situation.
\begin{Corollary}
  Let $(X_i)_{i\in\IN}$ be identically distributed free copies 
  of a random variable $X$ 
  from a noncommutative probability space $(\alg{A},\phi)$
  and $h:[n]\to \IN$ an index map. 
  Then
  $$
  \phi(X_{h(1)}X_{h(2)}\dots X_{h(n)}) 
  = \sum_{\pi \leq \ker h} C_\pi(X)
  .
  $$
\end{Corollary}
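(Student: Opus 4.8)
The plan is to start from the moment–cumulant relation applied to the $n$-tuple $(X_{h(1)},X_{h(2)},\dots,X_{h(n)})$ and then use freeness to discard every term indexed by a partition that fails to refine $\ker h$. First I would specialize the defining identity of the free cumulants to $\pi=\hat{1}_n$, which gives
$$
\phi(X_{h(1)}X_{h(2)}\dotsm X_{h(n)})
=\phi_{\hat{1}_n}(X_{h(1)},\dots,X_{h(n)})
=\sum_{\sigma\in\NC(n)} C_\sigma(X_{h(1)},\dots,X_{h(n)}),
$$
since $\hat{1}_n$ consists of the single block $[n]$. It then suffices to show that $C_\sigma(X_{h(1)},\dots,X_{h(n)})$ equals $C_\sigma(X)$ when $\sigma\leq\ker h$ and vanishes otherwise.

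For the vanishing I would argue block by block: writing $C_\sigma(X_{h(1)},\dots,X_{h(n)})=\prod_{B\in\sigma}C_B(X_{h(1)},\dots,X_{h(n)})$ and fixing a block $B$, if $h$ is not constant on $B$ then the arguments $X_{h(i)}$, $i\in B$, lie in at least two of the mutually free subalgebras generated by the individual $X_j$, so $C_B(X_{h(1)},\dots,X_{h(n)})=0$ by Speicher's vanishing-of-mixed-cumulants theorem, whence $C_\sigma(X_{h(1)},\dots,X_{h(n)})=0$. Since $\sigma\leq\ker h$ is exactly the assertion that $h$ is constant on every block of $\sigma$, the surviving terms are precisely those with $\sigma\in\NC(n)$ and $\sigma\leq\ker h$.

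For such a $\sigma$ and a block $B\in\sigma$, all the arguments $X_{h(i)}$ with $i\in B$ coincide with one single variable $X_j$, where $j$ is the common value of $h$ on $B$; because $X_j$ has the same distribution as $X$ and the free cumulant $C_B$ depends only on the joint distribution of its arguments — which is immediate from its definition via the ordered partial moments $\phi_B$ and M\"obius inversion over $\NC(n)$ — I would conclude $C_B(X_{h(1)},\dots,X_{h(n)})=C_{\abs{B}}(X_j)=C_{\abs{B}}(X)$, and taking the product over $B\in\sigma$ yields $C_\sigma(X_{h(1)},\dots,X_{h(n)})=C_\sigma(X)$. Combined with the previous step this gives the claimed formula. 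The only substantive ingredient is the vanishing of mixed free cumulants, already recorded above; everything else is routine lattice bookkeeping in $\NC(n)$, so I expect the main thing to be careful about is merely that ``$\sigma$ has a block on which $h$ is non-constant'' is precisely the negation of ``$\sigma\leq\ker h$''.
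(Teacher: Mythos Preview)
Your argument is correct and is exactly the intended one: the paper states this result without proof, presenting it as an immediate corollary of the moment--cumulant expansion together with Speicher's vanishing of mixed cumulants and the identical distribution of the $X_i$. Your write-up simply makes explicit the two routine steps the paper leaves implicit.
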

For example,
the only nonvanishing free cumulant of Wigner's semicircle law is $c_2$
and it follows that in the normalized case where $c_2=1$ 
the $2n$-th moment equals the number of noncrossing pair partitions
on $2n$ elements. Similarly the normalized \emph{free Poisson law} is 
characterized by the property that all free cumulants $c_n=1$ and thus
the $n$-th moment equals the number of noncrossing partitions on $n$
elements, that is, again the Catalan numbers.

\section{A Noncrossing Basis for Noncommutative Invariants}
\label{sec:noncrossing}
\subsection{The Symbolic Method \cite{Tambour:1991:noncommutative}}

We look for invariants of $T^m(\RR_d^*)$,
the space of $m$-homogeneous polynomials in the  noncommuting variables
$a_0$, $a_1$,\ldots,$a_d$,
under the induced action of $SL(2,\IC)$.
Earlier~\eqref{eq:defmonomials} we have identified 
these with $m$-linear forms on $R_d=S^d(V^*)$ and now in order to apply
the fundamental theorems we have to relate these to invariants of
$V^{*k}\times V^l$ for some $k$ and $l$. This is accomplished by 
Tambour's 
\emph{Symbolic Method}~\cite{Tambour:1991:noncommutative}
which proceeds as follows.
Denote $\symm:T^d(V^*)\to S^d(V^*)$ the projection (``symmetrizator'') which
maps a $d$-linear form on $V$ to its symmetrization.
Now every $m$-linear form $F$ on $\RR_d=S^d(V^*)$ is an
element of the tensor space $T^m(\RR_d^*)$ and extends to an $m$-linear
form $\symm^*F$ on $T^d(V^*)$, i.e., an element of $T^m(T^d(V^*))$ by setting,
for $z_1,z_2,\dots,z_n\in T^d(V^*)$,
$$
\symm^*F(z_1,z_2,\dots,z_m) = F(\symm (z_1),\symm(z_2),\dots,\symm(z_m))
$$
and a fortiori an $md$-linear form $\omega_F$ on $V^*$, called the
\emph{symbol}, namely
\begin{multline*}
  \omega_F(y_{11},y_{12},\dots,y_{1d},y_{21},\dots,y_{2d},\dots,y_{m1},\dots,y_{md})\\
  \begin{aligned}
    &= \symm^*F(y_{11}\otimes y_{12}\otimes\dots\otimes y_{1d},
                y_{21}\otimes\dots\otimes y_{2d},
                \dots, 
                y_{m1}\otimes\dots\otimes y_{md}) \\
    &= F(\symm(y_{11}\otimes y_{12}\otimes\dots\otimes y_{1d}),
         \symm(y_{21}\otimes\dots\otimes y_{2d}),
         \dots, 
         \symm(y_{m1}\otimes\dots\otimes y_{md}))
  .
  \end{aligned}
\end{multline*}
Now it is immediate that the symbol $\omega_F$ is invariant under
permutation of each block of arguments $y_{i1},y_{i2},\dots,y_{id}$
and because of this symmetry it will be enough to consider
 map
$$
\tilde{\omega}_F:
(y_1,y_2,\dots,y_m)\mapsto\omega_F(y_1,\dots,y_1,y_2,\dots,y_2,\dots,y_m,\dots,y_m)
$$
which is $d$-homogeneous in each variable.
\begin{Example}
  Consider the linear functional $a_0\in T^1(R_d^*)$ which is defined by
  $$
  a_0(\sum\binom{d}{k} \xi_k X^k Y^{d-k}) = \xi_0
  .
  $$
  Its symbol evaluated at $y=\eta_1 X+\eta_2Y\in V^*$ is
  \begin{align*}
    \omega_{a_0}(y,\dots,y) 
    &= \symm^*a_0(y\otimes y\otimes \dots \otimes y)\\
    &= a_0((\eta_1X+\eta_2Y)^d)\\
    &= a_0(\sum \binom{d}{k} \eta_1^k\eta_2^{d-k} X^k Y^{d-k})\\
    &= \eta_2^d
  \end{align*}
\end{Example}
So far we have shown one half of the following lemma.
\begin{Lemma}
  The invariant $m$-linear forms on $R_d$ are in one-to-one correspondence
  with the $md$-linear forms on $V^*$ which are invariant under
  permutations $\SG_d\times \SG_d\times\dots\times \SG_d$.
\end{Lemma}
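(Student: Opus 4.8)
The plan is to complete the proof by producing the inverse of the symbol map $F\mapsto\omega_F$ and checking that both directions respect the relevant structure; the text has already supplied the forward direction, namely that $\omega_F$ is a well-defined $md$-linear form on $V^*$ invariant under $\SG_d\times\SG_d\times\dots\times\SG_d$. The only genuine ingredient is the elementary remark that, over $\IC$, the symmetrizator $\symm=\frac1{d!}\sum_{\sigma\in\SG_d}\sigma$ (with $\SG_d$ permuting tensor factors) is the projection of $T^d(V^*)$ onto the subspace $\RR_d=S^d(V^*)$: it satisfies $\symm^2=\symm$, $\symm|_{\RR_d}=\mathrm{id}$, it is onto $\RR_d$, and it commutes with the induced action of $\GL(V)$, since permuting slots is unaffected by acting on each slot. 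Consequently $\symm^{\otimes m}$ is the $\SL(2,\IC)$-equivariant projection of $T^d(V^*)^{\otimes m}$ onto $\RR_d^{\otimes m}$.

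Next I would make the identification behind $\omega_F$ explicit: an $m$-linear form on $T^d(V^*)$ is the same thing as a linear functional on $T^d(V^*)^{\otimes m}$, and $T^d(V^*)^{\otimes m}\cong (V^*)^{\otimes md}$ canonically, so $m$-linear forms on $T^d(V^*)$ correspond bijectively to $md$-linear forms on $V^*$; this is precisely the passage from $\symm^*F$ to $\omega_F$. Under this correspondence, the $i$-th factor $\SG_d$ acting on the arguments $y_{i1},\dots,y_{id}$ of an $md$-linear form on $V^*$ becomes the tensor-factor permutation on the $i$-th copy of $T^d(V^*)$. I would also record that $\symm^*F$, viewed as a functional on $T^d(V^*)^{\otimes m}$, equals $F\circ\symm^{\otimes m}$, and hence restricts to $F$ itself on $\RR_d^{\otimes m}$.

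The key step is then short. Let $\omega$ be an $md$-linear form on $V^*$ invariant under $\SG_d\times\dots\times\SG_d$, and let $G$ be the corresponding functional on $T^d(V^*)^{\otimes m}$; averaging over $\SG_d\times\dots\times\SG_d$ gives $G=G\circ\symm^{\otimes m}$, hence $G=F\circ\symm^{\otimes m}$ with $F:=G|_{\RR_d^{\otimes m}}$ the unique $m$-linear form on $\RR_d$ having $G=F\circ\symm^{\otimes m}$, uniqueness holding because $\symm^{\otimes m}$ is onto $\RR_d^{\otimes m}$. Unwinding the identifications, $F\circ\symm^{\otimes m}$ is $\symm^*F$, so $\omega=\omega_F$; and injectivity of $F\mapsto\omega_F$ is immediate from the last observation of the previous paragraph. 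Thus $F\mapsto\omega_F$ is a bijection from the $m$-linear forms on $\RR_d$ onto the $\SG_d\times\dots\times\SG_d$-invariant $md$-linear forms on $V^*$. Finally, since $\symm^{\otimes m}$ and all the identifications above intertwine the $\SL(2,\IC)$-actions, this bijection carries $\SL(2,\IC)$-invariant forms to $\SL(2,\IC)$-invariant forms in both directions, which is the stated correspondence.

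I do not expect a real obstacle; the lemma is soft. The one place that calls for care is keeping the two incarnations of the $\SG_d\times\dots\times\SG_d$-action consistent — on the arguments of an $md$-linear form on $V^*$ versus on the tensor factors in $T^d(V^*)^{\otimes m}$ — together with the chain of identifications from $T^m(\RR_d^*)$ to $m$-linear forms on $T^d(V^*)$ to $md$-linear forms on $V^*$. Once those are pinned down, every step is purely formal.
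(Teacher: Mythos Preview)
Your argument is correct and complete. It differs in spirit from the paper's treatment: the paper does not give an abstract proof but instead describes the inverse map (\emph{restitution}) concretely. Starting from an $\SL(2,\IC)$-invariant $\SG_d^m$-symmetric $md$-linear form $\omega$, it invokes the First Fundamental Theorem to write $\tilde{\omega}(y_1,\dots,y_m)$ as a linear combination of bracket monomials $\prod\NCsymbol{y_{i_k}}{y_{j_k}}$ with each $y_i$ appearing $d$ times, then expands each bracket monomial in the coordinates $\eta_{ij}$ and reads off the corresponding noncommutative polynomial in the $a_k$.

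Your route via the projection $\symm^{\otimes m}$ is cleaner and more general: it establishes the bijection for \emph{all} $m$-linear forms on $\RR_d$ (not just the $\SL(2,\IC)$-invariant ones) by pure linear algebra, with the $\SL(2,\IC)$-equivariance as an immediate corollary, and it does not need the First Fundamental Theorem at this stage. The paper's explicit restitution, on the other hand, is less self-contained as a proof of the lemma but has the virtue of directly producing the bracket-monomial description of invariants that the rest of Section~3 relies on.
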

The opposite process which reconstructs an $m$-linear
form from its symbol is called \emph{restitution}
and establishes the other half of the lemma.
We start  with an example.
\begin{Example}
The $2d$-linear $\SG_d\times\SG_d$-invariant map  corresponding to the
multi-$d$-homogeneous
map
\begin{align*}
  \omega:(V^{*})^2&\to \IC\\
  (y_1,y_2) &\mapsto \NCsymbol{y_1}{y_2}^d
\end{align*}
is invariant and is the symbol of the following $d$-linear form:
\begin{align*}
  \NCsymbol{y_1}{y_2}^d
  &= \det
     \begin{pmatrix}
       \eta_{11} & \eta_{21}\\
       \eta_{12} & \eta_{22}
     \end{pmatrix}^d \\
  &= (\eta_{11}\eta_{22}-\eta_{12}\eta_{21})^d\\
  &= \sum_{k=0}^d \binom{d}{k} (-1)^k (\eta_{11}\eta_{22})^{d-k}(\eta_{12}\eta_{21})^k \\
  &= \sum_{k=0}^d \binom{d}{k} (-1)^k \eta_{11}^{d-k}  \eta_{12}^k\eta_{21}^k\eta_{22}^{d-k} \\
\end{align*}
This means that
$$
F(\sum\binom{d}{k} \xi_{1k} X^k Y^{d-k},\sum\binom{d}{k} \xi_{2k} X^k Y^{d-k}) 
= \sum_{k=0}^d \binom{d}{k} (-1)^k \xi_{1,d-k}\xi_{2,k}
$$
i.e.,
$$
F=\sum_{k=0}^d \binom{d}{k} (-1)^k a_{d-k}a_k
$$
and for $d=2$ this is the noncommutative discriminant $a_2a_0-a_1a_1+a_0a_2$.
\end{Example}
In general, if $\omega:(V^*)^{md}$ is an invariant multilinear functional
which is also invariant under permutations from $\SG_d^m$,
then by the first fundamental theorem the value
$$\omega(y_1,y_1,\dots,y_1,
        y_2,y_2,\dots,y_2,
        \dots
        y_m,y_m,\dots,y_m)
$$
must be a linear combination
of products of brackets 
$\NCsymbol{y_i}{y_j}$ with $i\ne j$ where each $y_i$ appears
exactly $d$ times. Thus by linearity it suffices to construct for each
$(m-d)$-homogeneous form
$$
\tilde{\omega}(y_1,y_2,\dots,y_m) = \prod \NCsymbol{y_{i_k}}{y_{j_k}}
$$
satisfying the condition just stated
a noncommutative invariant whose symbol is $\omega$.
Now if we decompose $y_i=\eta_{i1} X + \eta_{i2}Y$ we have
$$
\tilde{\omega}(y_1,y_2,\dots,y_m) = \prod (\eta_{i_k1} \eta_{j_k2} -
\eta_{i_k2} \eta_{j_k1} )
$$
and expanding the product we get a sum of terms of the form
$$
\prod_{i=1}^m \eta_{i1}^{s_i} \eta_{i2}^{d-s_i}
$$
which is the symbol of the noncommutative monomial
$$
\prod_{i=1}^m a_{s_i}
.
$$

\subsection{Finding a basis}
We have thus used the first fundamental theorem to determine all invariants;
namely, the symbols are spanned by the elementary symbols
$$
\prod \NCsymbol{y_i}{y_j}
$$
where each $y_i$ appears exactly $d$ times. 
For finding a basis it is convenient to use diagrams.
\begin{Definition}
  \label{def:mpartite}
  An \emph{$m$-partite partition} of the set $[dm]$ is a partition
  whose blocks contain at most one element from each interval
  $\{kd+1,kd+2,\dots,(k+1)d\}$. 
  To each $m$-partite pair partition
  $\pi=\{\{i_1,j_1\},\dots,\{i_q,j_q\}\}$
  we associate the symbol
  $$
  \tilde{\omega}_\pi(y_1,\dots,y_m) = 
  \prod \NCsymbol{y_{i_k}}{y_{j_k}}
  $$
\end{Definition}

It is easy to see that different partitions may lead to identical symbols
and in particular the corresponding symbols are not linearly independent.
Moreover the Pl\"ucker relations lead to even more linear dependencies.
We shall show that the latter is true if we restrict to
noncrossing $m$-partite pair partitions.
Moreover in the rest of this section we prove that they form a basis:
\begin{Theorem}
  \label{thm:dimTmRRdG}
  The dimension of the space
  $T^m(\RR_d^*)^G$ of invariant noncommutative polynomials
  is equal to the number of $m$-partite noncrossing pair partitions
  $\pi\in\NC(md)$.
\end{Theorem}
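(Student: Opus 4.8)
The plan is to bracket $\dim T^m(\RR_d^*)^G$ from both sides by the number $N=N(m,d)$ of noncrossing $m$-partite pair partitions of $[md]$. The symbolic method (carried out above) already identifies $T^m(\RR_d^*)^G$ with the span of the symbols $\tilde\omega_\pi$, where $\pi$ runs over \emph{all} $m$-partite pair partitions of $[md]$ and each variable $y_r$ occurs in exactly $d$ brackets; under this correspondence linear independence of symbols is equivalent to linear independence of the corresponding invariant polynomials. So it suffices to show (a) the symbols of noncrossing $m$-partite pair partitions already span (hence $\dim\le N$) and (b) they are linearly independent (hence $\dim\ge N$).

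For (a) I would straighten out crossings by the Plücker relation. Order the $m$-partite pair partitions of $[md]$ by $\bigl(\ell(\pi),\mathrm{cr}(\pi)\bigr)$ in the lexicographic order, where $\ell(\pi)=\sum_{\{p,q\}\in\pi,\ p<q}(q-p)$ and $\mathrm{cr}(\pi)$ is the number of crossings of $\pi$. If $\pi$ has a crossing, pick blocks $\{a,c\},\{b,d\}$ with $a<b<c<d$; the Plücker relation~\eqref{eq:invariants:syzygies2} together with the antisymmetry~\eqref{eq:invariants:syzygies1} gives
$$
\NCsymbol{y_a}{y_c}\NCsymbol{y_b}{y_d}=\NCsymbol{y_a}{y_b}\NCsymbol{y_c}{y_d}+\NCsymbol{y_a}{y_d}\NCsymbol{y_b}{y_c},
$$
so $\tilde\omega_\pi$ is the sum of the two symbols obtained by re-pairing these four points ``side by side'' and ``nested'' (a symbol with a block inside a single interval is zero and drops out). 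A direct check shows $\ell$ strictly decreases for the side-by-side term, while for the nested term $\ell$ is unchanged but $\mathrm{cr}$ strictly decreases, since replacing a crossing pair of arcs by the corresponding nested pair cannot create a crossing with any third arc. Hence both new partitions precede $\pi$ in our order, the rewriting terminates, and every $\tilde\omega_\pi$ lies in the span of the noncrossing ones.

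For (b) I would specialize the arguments to $y_r=(1,t_r)$ with indeterminates $t_1,\dots,t_m$, so that $\NCsymbol{y_r}{y_{r'}}=t_{r'}-t_r$ and
$$
\tilde\omega_\pi\bigl((1,t_1),\dots,(1,t_m)\bigr)=\prod_{\{p,q\}\in\pi,\ p<q}\bigl(t_{\rho(q)}-t_{\rho(p)}\bigr),
$$
where $\rho$ sends a position to the interval containing it (so $p<q$ forces $\rho(p)<\rho(q)$); since the symbols are homogeneous of degree $d$ in each $y_r$, this specialization is injective on the relevant space of forms. With respect to the lexicographic order $t_1>t_2>\dots>t_m$ the leading monomial of each factor is $t_{\rho(p)}$, so $\tilde\omega_\pi$ has leading monomial $\prod_{r=1}^m t_r^{\delta_r(\pi)}$ with nonzero leading coefficient, where $\delta_r(\pi)$ is the number of arcs of $\pi$ whose lower endpoint lies in the $r$-th interval. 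Thus (b) follows once we know that $\pi\mapsto(\delta_1(\pi),\dots,\delta_m(\pi))$ is injective on noncrossing $m$-partite pair partitions. I would establish this by reconstructing $\pi$ from its degree vector: scanning the intervals from the right and using noncrossingness, the arcs entering each interval from the left are forced by the remaining degree budget, which yields a greedy inductive reconstruction.

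Putting (a) and (b) together, the symbols of noncrossing $m$-partite pair partitions form a basis of $T^m(\RR_d^*)^G$, so its dimension equals $N(m,d)$. I expect the main obstacle to be the injectivity in (b): that a noncrossing $m$-partite pair partition is uniquely determined by the numbers of arcs it sends out of each interval to the right (equivalently, by the underlying multigraph on $[m]$ oriented towards larger vertices). By contrast, the termination of the straightening in (a) and the leading-term computation in (b) are routine once the monomial orders are chosen as above.
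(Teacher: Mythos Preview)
Your proposal is correct and follows essentially the same route as the paper: Pl\"ucker relations to reduce to noncrossing symbols, then a leading-term argument showing that a noncrossing $m$-partite pair partition is determined by the vector of outgoing-arc counts $(\delta_1,\dots,\delta_m)$. The only cosmetic differences are that you give a more explicit termination measure $(\ell(\pi),\mathrm{cr}(\pi))$ for the straightening and read off the leading monomial after the commutative specialization $y_r=(1,t_r)$, whereas the paper reads off the leading noncommutative monomial $a_{\delta_1}\dotsm a_{\delta_m}$ directly; the key injectivity step is identical.
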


The key observation is that the Pl\"ucker relation
$$
\NCsymbol{v_1}{v_3}\NCsymbol{v_2}{v_4} = \NCsymbol{v_1}{v_2}\NCsymbol{v_3}{v_4} + \NCsymbol{v_1}{v_4}\NCsymbol{v_2}{v_3} 
$$
has a pictorial interpretation as follows:
$$
\begin{asy}
  size(0.15*textwidth);
  draw((1,0)--(1,h0));
  draw((2,0)--(2,h1));
  draw((3,0)--(3,h0));
  draw((4,0)--(4,h1));
  draw((1,h0)--(3,h0));
  draw((2,h1)--(4,h1));
\end{asy}
=
\begin{asy}
  size(0.15*textwidth);
  draw((1,0)--(1,h0));
  draw((2,0)--(2,h0));
  draw((3,0)--(3,h0));
  draw((4,0)--(4,h0));
  draw((1,h0)--(2,h0));
  draw((3,h0)--(4,h0));
\end{asy}
+
\begin{asy}
  size(0.15*textwidth);
  draw((1,0)--(1,h1));
  draw((2,0)--(2,h0));
  draw((3,0)--(3,h0));
  draw((4,0)--(4,h1));
  draw((1,h1)--(4,h1));
  draw((2,h0)--(3,h0));
\end{asy}
$$

We see that the number of crossings is reduced by one and
this means that if we start with an elementary symbol
$$
\prod \NCsymbol{y_{i_k}}{y_{j_k}}
$$
we can associate to it a pairing
and successively remove any crossings to obtain a linear combination
of noncrossing pairings. Thus the space of symbols is spanned by 
noncrossing symbols.
This strategy is different from the usual
\emph{straightening algorithm} where the
formula is read as
$$
\NCsymbol{y_1}{y_4}\NCsymbol{y_2}{y_3} = \NCsymbol{y_1}{y_3}\NCsymbol{y_2}{y_4}-\NCsymbol{y_1}{y_2}\NCsymbol{y_3}{y_4}
,
$$

$$
\begin{asy}
  size(0.15*textwidth);
  draw((1,0)--(1,h1));
  draw((2,0)--(2,h0));
  draw((3,0)--(3,h0));
  draw((4,0)--(4,h1));
  draw((1,h1)--(4,h1));
  draw((2,h0)--(3,h0));
\end{asy}
=
\begin{asy}
  size(0.15*textwidth);
  draw((1,0)--(1,h0));
  draw((2,0)--(2,h1));
  draw((3,0)--(3,h0));
  draw((4,0)--(4,h1));
  draw((1,h0)--(3,h0));
  draw((2,h1)--(4,h1));
\end{asy}
-
\begin{asy}
  size(0.15*textwidth);
  draw((1,0)--(1,h0));
  draw((2,0)--(2,h0));
  draw((3,0)--(3,h0));
  draw((4,0)--(4,h0));
  draw((1,h0)--(2,h0));
  draw((3,h0)--(4,h0));
\end{asy}
$$
i.e., nestings are removed.
The straightening algorithm has the advantage
to be applicable for arbitrary $\SL(n,\IC)$,
whereas our approach only works for $\SL(2,\IC)$.
The next lemma concludes the proof of Theorem~\ref{thm:dimTmRRdG}.
\begin{Lemma}
  Symbols coming from different noncrossing pairings are linearly independent.
  The irreducible noncrossing pairings, that is, those in which 
  the left- and rightmost vertices are connected with each other,
  generate the invariants as a ring.
\end{Lemma}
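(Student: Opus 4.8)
The plan is to put noncrossing $m$-partite pair partitions into a combinatorial normal form. Read a noncrossing pair partition $\pi$ of $[dm]$ as a balanced parenthesis word: call $s\in[dm]$ an \emph{opener} if it is the smaller element of its block and a \emph{closer} otherwise; then $\pi$ is the unique noncrossing matching with that opener/closer pattern (match each closer to the nearest preceding unmatched opener). The structural fact I would establish first is: \emph{a noncrossing pair partition is $m$-partite iff within each interval $G_k=\{(k-1)d+1,\dots,kd\}$ all closers precede all openers.} Indeed, if $G_k$ contained an opener before some of its closers, let $s'$ be the first closer of $G_k$; every opener of $G_k$ occurring before $s'$ is still on the matching stack, the topmost of them is an opener of $G_k$, so $s'$ is matched to an element of $G_k$ -- contradicting $m$-partiteness. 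Conversely, under the block structure ``closers then openers'' an opener of $G_k$ can only be popped by a closer of a later group, so every block crosses an interval boundary. A consequence is that a noncrossing $m$-partite $\pi$ of $[dm]$ is \emph{determined} by the vector $v=(v_1,\dots,v_m)$ with $v_k=\#\{\text{closers of }G_k\}$, which necessarily satisfies $v_1=0$, $0\le v_k\le d$, and the Dyck conditions $h_k:=\sum_{j\le k}(d-2v_j)\ge 0$, $h_{k-1}\ge v_k$, $h_m=0$.

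For the first assertion I would order $\IC[\eta_{i1},\eta_{i2}:1\le i\le m]$ lexicographically by $\eta_{11}>\eta_{12}>\eta_{21}>\eta_{22}>\cdots$. Since $\NCsymbol{y_a}{y_b}=\eta_{a1}\eta_{b2}-\eta_{a2}\eta_{b1}$ has leading monomial $\eta_{a1}\eta_{b2}$ for $a<b$, and $\tilde\omega_\pi$ is the product over the blocks $\{s,t\}$ of $\pi$ of the brackets $\NCsymbol{y_{g(s)}}{y_{g(t)}}$ (where $g(s)$ is the index of the interval containing $s$, so $s<t$ forces $g(s)<g(t)$), the leading monomial of $\tilde\omega_\pi$ is $\prod_{s\ \mathrm{opener}}\eta_{g(s),1}\cdot\prod_{s\ \mathrm{closer}}\eta_{g(s),2}=\prod_{k=1}^m\eta_{k1}^{\,d-v_k}\eta_{k2}^{\,v_k}$. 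This monomial depends only on $v$, and $v$ determines $\pi$ by the structural fact; hence distinct noncrossing $m$-partite pairings have distinct leading monomials and the $\tilde\omega_\pi$ are linearly independent. Polarization in each block of $d$ arguments recovers the full symbol from $\tilde\omega_\pi$, and the symbolic correspondence is a bijection onto the $\SG_d^m$-invariant $dm$-linear forms, so the invariants $F_\pi\in T^m(\RR_d^*)^G$ attached to these pairings are linearly independent; together with the spanning already established this proves Theorem~\ref{thm:dimTmRRdG}.

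For the second assertion, note that concatenation of noncommutative polynomials multiplies symbols: the symbol of the product of $F\in T^m(\RR_d^*)^G$ and $F'\in T^{m'}(\RR_d^*)^G$ is $\tilde\omega_F(y_1,\dots,y_m)\,\tilde\omega_{F'}(y_{m+1},\dots,y_{m+m'})$, so on the basis $F_\pi F_{\pi'}=F_{\pi\sqcup\pi'}$, where $\pi\sqcup\pi'$ carries $\pi$ on $G_1,\dots,G_m$ and a shifted copy of $\pi'$ on the remaining groups, again a noncrossing $m$-partite pairing. Call $\pi$ \emph{reducible} when its symbol graph on the vertices $y_1,\dots,y_m$ separates $y_1$ from $y_m$; taking $a$ to be the largest index in the component of $y_1$, noncrossingness of the symbol graph forces the cut $\{1,\dots,a\}\mid\{a+1,\dots,m\}$ to carry no edge, equivalently $h_a=0$, i.e.\ the prefix $G_1\cup\dots\cup G_a$ is $\pi$-closed. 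In that case no block crosses $da$, so $\pi=\pi'\sqcup\pi''$ with $\pi'$ the noncrossing $a$-partite pairing of $[da]$ on $G_1,\dots,G_a$ and $\pi''$ the noncrossing $(m-a)$-partite pairing of $[d(m-a)]$ on the remaining groups (both inherit noncrossingness and $m$-partiteness, and $v_{a+1}=0$ since the stack is empty entering $G_{a+1}$), and $F_\pi=F_{\pi'}F_{\pi''}$. Iterating until no factor is reducible exhibits every $F_\pi$, hence every invariant, as a product of invariants of irreducible noncrossing pairings; since this decomposition into irreducibles is moreover unique, these invariants generate the ring freely.

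The step I expect to be the main obstacle is the structural characterization of $m$-partiteness (the ``closers before openers in each interval'' normal form): it is what turns the numbers $v_k$ into honest coordinates -- driving the leading-term proof of linear independence -- and it is also what guarantees that the irreducible splittings fall exactly on interval boundaries.
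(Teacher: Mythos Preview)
Your linear-independence argument is essentially the paper's: your ``closers precede openers in each $G_k$'' is exactly the paper's ``incoming edges always come before outgoing edges,'' your $d-v_k$ is the paper's ``number of outgoing edges of the $k$-th interval,'' and your lex order on the $\eta_{ij}$ corresponds under restitution ($\prod_k \eta_{k1}^{s_k}\eta_{k2}^{d-s_k}\mapsto a_{s_1}\cdots a_{s_m}$) to the paper's lex order on monomials in the $a_k$. The structural lemma you single out as the main obstacle is precisely the one-line observation the paper uses; you have supplied the details the paper omits, but the route is the same.

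For the generation statement the paper merely cites Teranishi, whereas you give an explicit argument via $F_{\pi}F_{\pi'}=F_{\pi\sqcup\pi'}$ and the decomposition at the zeros of $h_a$; this is correct and in fact proves free generation. One caution: your notion of ``irreducible'' (symbol graph on $y_1,\dots,y_m$ connected, equivalently $h_a>0$ for all $0<a<m$) is \emph{strictly weaker} than the literal reading of the lemma (``leftmost and rightmost vertices connected,'' i.e.\ $\{1,dm\}\in\pi$, equivalently the paper's ``only one outer block''). For instance, with $d=4$, $m=5$ the opener/closer profile $v=(0,3,2,1,4)$ gives a noncrossing $5$-partite pairing in which $1$ is matched to $10$, not to $20$, yet $h_1,\dots,h_4=4,2,2,4$ are all positive so its symbol graph is connected. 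Since the $F_\pi$ form a basis and products are concatenations, this $F_\pi$ cannot lie in the subalgebra generated by the ``only one outer block'' invariants. So your reading of ``irreducible'' is the one under which the generation claim (and the free-generation claim) actually holds; the stricter reading does not yield a generating set. It would be worth making this explicit.
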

\begin{proof}
  This can be shown as in \cite{Teranishi:1988:noncommutative}.
  We order the noncommutative monomials in $T^m(R_d^*)$ lexicographically
  with respect to the order
  $a_{d}>a_{d-1}>\dots> a_{0}$ on the letters
  and we will show that different noncrossing symbols
  have different leading terms with respect to this order.
  Let us first consider an example:
$$
  \begin{aligned}
\begin{asy}
  size(0.4*textwidth);
  draw((1,0)--(1,h3));
  draw((2,0)--(2,h2));
  draw((3,0)--(3,h1));
  draw((4,0)--(4,h0));
  draw((5,0)--(5,h0));
  draw((6,0)--(6,h1));
  draw((7,0)--(7,h1));
  draw((8,0)--(8,h0));
  draw((9,0)--(9,h0));
  draw((10,0)--(10,h1));
  draw((11,0)--(11,h2));
  draw((12,0)--(12,h3));
  draw((1,h3)--(12,h3));
  draw((2,h2)--(11,h2));
  draw((3,h1)--(6,h1));
  draw((4,h0)--(5,h0));
  draw((7,h1)--(10,h1));
  draw((8,h0)--(9,h0));
  label("1",(1,0),S);
  label("1",(2,0),S);
  label("1",(3,0),S);
  label("1",(4,0),S);
  label("2",(5,0),S);
  label("2",(6,0),S);
  label("2",(7,0),S);
  label("2",(8,0),S);
  label("3",(9,0),S);
  label("3",(10,0),S);
  label("3",(11,0),S);
  label("3",(12,0),S);
\end{asy}
  &= \NCsymbol{1}{2}^2\NCsymbol{1}{3}^2\NCsymbol{2}{3}^2 \\
  &= (\eta_{11}\eta_{22}-\eta_{12}\eta_{21})^2
     (\eta_{11}\eta_{32}-\eta_{12}\eta_{31})^2
     (\eta_{21}\eta_{32}-\eta_{22}\eta_{31})^2 \\
  &= \eta_{11}^4\eta_{21}^2\eta_{22}^2\eta_{32}^4+\dotsm\\
  &\simeq a_4a_2a_0 + \dotsm
\end{aligned}
$$
  Consider the $k$-th interval $\{(k-1)d+1,(k-1)d+2,\dots,kd\}$.
  An edge adjacent to this interval is called \emph{incoming} 
  if it connects to an element to the left and
  \emph{outgoing} if it connects to the right.
  Then the index of the $k$-th factor of the leading term
  indicates the number of outgoing edges of the $k$-th interval.

  Since in a noncrossing partition the incoming edges always come 
  before the outgoing edges, these numbers uniquely determine the partition.
  Thus different noncrossing partitions have different leading terms.

  As in \cite{Teranishi:1988:noncommutative} one can show that the invariants
  coming from noncrossing irreducible symbols
  (i.e., those with only one outer block)
  form a free generating set of the ring of noncommutative invariants.
\end{proof}

Note that this also yields an explicit bijection between $m$-partite
noncrossing pair partitions and column-strict Young tableaux.
This combinatorial coincidence was also found independently 
in~\cite{BenaychNechita:2008:permutation} by establishing a bijection with
the Young tableaux of Teranishi~\cite{Teranishi:1988:noncommutative}.

\section{Free Stochastic Measures and the Hilbert series}
\label{sec:freestochastic}
In order to find the Hilbert-Poincar\'e series
\begin{equation}
\label{eq:hilbertseries}
H_d(z)=
H(T(\RR_d^*)^G;z)
= \sum_{m=0}^\infty \dim T^m(\RR_d^*)^G\, z^m
\end{equation}
for fixed $d$ one usually resorts to integration on the
group (Molien's formula)
which in our case reads
\begin{Theorem}[{ \cite{AlmkvistDicksFormanek:1985:Hilbert}}]
  \label{thm:almkvist}
  $$
  H_d(z)
  = \frac{2}{\pi} 
    \int_0^\pi
    \frac{\sin^2 x}%
    {1-\frac{\sin(d+1)x}%
            {\sin x}\,z}
    \,
    dx
  $$
\end{Theorem}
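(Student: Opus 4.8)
By Theorem~\ref{thm:dimTmRRdG} it is enough to show that
$$
\sum_{m\geq0} N_{m,d}\,z^m
= \frac{2}{\pi}\int_0^\pi
  \frac{\sin^2 x}{1-\frac{\sin(d+1)x}{\sin x}\,z}\,dx ,
$$
where $N_{m,d}$ denotes the number of $m$-partite noncrossing pair partitions of $[md]$. The idea is to realise the left-hand side as a moment generating function in free probability. Let $s$ be a semicircular element of unit variance (i.e.\ $c_2=1$) in a noncommutative probability space $(\alg{A},\phi)$, so that, as recalled in Section~\ref{sec:freeprobability}, $\phi(s^{2n})$ equals the number of noncrossing pair partitions of $[2n]$ and $\phi(s^{2n+1})=0$; and let $p_d(t)=U_d(t/2)$ be the $d$-th monic orthogonal polynomial of Wigner's semicircle law $\mu$ on $[-2,2]$, where $U_d$ is the Chebyshev polynomial of the second kind, so that $p_d(2\cos x)=\sin(d+1)x/\sin x$. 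The crux of the argument is the combinatorial identity
$$
N_{m,d}=\phi\bigl(p_d(s)^m\bigr)\qquad(m\geq0).
$$

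I would prove this identity by reading $p_d(s)$ as the $d$-th free Wick power of $s$ (equivalently, the free stochastic measure of order $d$ of the semicircular noise). On the full Fock space model $s=\ell+\ell^*$, with $\ell^*\Omega_n=\Omega_{n+1}$ and $\ell\Omega_n=\Omega_{n-1}$, the three-term recursion $p_{d+1}(t)=t\,p_d(t)-p_{d-1}(t)$ gives $p_d(s)\Omega_0=\Omega_d$, and the Chebyshev linearisation $U_dU_n=\sum_k U_k$ gives $p_d(s)\Omega_n=\sum_k\Omega_k$, the sum running over $\abs{d-n}\leq k\leq d+n$ with $k\equiv d+n\pmod2$. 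Hence $\phi(p_d(s)^m)=\langle\Omega_0,p_d(s)^m\Omega_0\rangle$ is the number of sequences $0=n_0,n_1,\dots,n_{m-1},n_m=0$ satisfying this step condition, and these are in bijection with $m$-partite noncrossing pair partitions of $[md]$: to a partition one associates the numbers $n_i$ of edges joining the first $i$ length-$d$ blocks to the last $m-i$ blocks; conversely, given the sequence, inside the $i$-th block exactly $\ell_i:=(n_{i-1}+d-n_i)/2$ of the $d$ vertices are matched leftwards, and noncrossingness forces these to terminate the innermost $\ell_i$ of the $n_{i-1}$ edges entering the block (the remaining $d-\ell_i$ vertices opening new edges) in a unique admissible way. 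Equivalently, and perhaps more transparently, one verifies that replacing $s^d$ by $p_d(s)$ in $\phi(s^{md})$ deletes precisely the contributions of noncrossing pairings having a pair internal to one of the $m$ consecutive blocks of length $d$ --- which is exactly the defining feature of the free stochastic measure.

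Granting the identity, the rest is routine. Since $\abs{U_d(\cos x)}\leq d+1$ we have $\|p_d(s)\|\leq d+1$, so for $\abs{z}<1/(d+1)$ the series converges in norm and, by the spectral theorem applied to the selfadjoint element $s$ with spectral distribution $\mu$,
$$
\sum_{m\geq0} N_{m,d}\,z^m
=\phi\bigl((1-z\,p_d(s))^{-1}\bigr)
=\int_{-2}^{2}\frac{d\mu(t)}{1-z\,p_d(t)} .
$$
The substitution $t=2\cos x$, $x\in[0,\pi]$, turns the semicircle density $\frac{1}{2\pi}\sqrt{4-t^2}\,dt$ into $\frac{2}{\pi}\sin^2 x\,dx$ and $p_d(t)=U_d(\cos x)$ into $\sin(d+1)x/\sin x$, which yields exactly the asserted formula; since both sides are analytic in $z$ near the origin the identity then extends to the full range of validity.

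The main obstacle is the combinatorial identity $N_{m,d}=\phi(p_d(s)^m)$, that is, the fact that the free Wick powers of a semicircular element are the Chebyshev polynomials $U_d(\,\cdot\,/2)$ and that their mixed moments enumerate the noncrossing pair partitions with no pair confined to a single block of $d$ consecutive points. Everything else --- the moment formula for the semicircle law, the identity $U_d(\cos x)=\sin(d+1)x/\sin x$, and the change of variables $t=2\cos x$ --- is bookkeeping.
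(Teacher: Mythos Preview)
Your proof is correct and follows the same overall strategy as the paper: reduce via Theorem~\ref{thm:dimTmRRdG} to the identity $N_{m,d}=\phi(U_d(s)^m)$ for a standard semicircular $s$, then evaluate the moment generating function by the substitution $t=2\cos x$ in the semicircle integral.

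The one difference worth recording is how the key identity $N_{m,d}=\phi(U_d(s)^m)$ is obtained. The paper does not argue directly on the Fock space; instead it quotes Anshelevich's results on free stochastic measures: formula~\eqref{eq:phipsik1...psikm} (his Proposition~4 together with $\phi(\StochMeas_\sigma)=C_\sigma(X)$) gives $\phi(\psi_d^m)=\sum_{\sigma} C_\sigma(X)$ over $m$-partite noncrossing $\sigma$, which for semicircular $X$ collapses to $N_{m,d}$; and separately the orthogonality relation $\phi(\psi_m\psi_n)=\delta_{mn}$ (his Proposition~5) identifies $\psi_d$ with the orthogonal polynomial $U_d(X)$. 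Your Fock-space computation together with the explicit walk bijection is a self-contained replacement for this machinery: it simultaneously shows that $U_d(s)$ plays the role of the order-$d$ free Wick product and that its $m$-th moment counts the right partitions, without importing the general theory. The trade-off is that the paper's route makes the appearance of $m$-partite noncrossing partitions an instance of a general formula valid for any free infinitely divisible $X$, whereas your argument is tailored to the semicircular case but entirely elementary.
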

Our aim here is to provide a different proof of this
by establishing a combinatorial link to \emph{free stochastic 
measures}. The latter have been constructed by
Anshelevich~\cite{Anshelevich:2000:freestochastic} following
Rota and Wallstrom~\cite{RotaWallstrom:1997:stochastic}.
Let $X$ be a free infinitely divisible random variable.
Then for every $N\in\IN$ we can write $X$ as a sum
of identically distributed free random variables $X_i^{(N)}$,
$i\in\{1,\dots,N\}$ and for 
every partition $\pi\in\Pi_n$ the \emph{stochastic measure} $\StochMeas_\pi$
and the \emph{product measure} $\ProdMeas_\pi$ are defined as the elements
\begin{align*}
\StochMeas_\pi 
&= \lim_{N\to\infty} 
   \sum_{\ker h = \pi} 
   X_{h(1)}^{(N)}  X_{h(2)}^{(N)}\dotsm X_{h(n)}^{(N)}\\
\ProdMeas_\pi 
&= \lim_{N\to\infty} 
   \sum_{\ker h \geq \pi} 
   X_{h(1)}^{(N)}  X_{h(2)}^{(N)}\dotsm X_{h(n)}^{(N)}
   .
\end{align*}
It can be shown that the limits exist in norm
and
we will be particularly interested in the special cases
$\psi_n=\StochMeas_{\hat{0}_n}$ and the so called diagonal measures
$\Delta_n=\StochMeas_{\hat{1}_n}$. 
The following properties hold:
$\StochMeas_\pi=0$ unless $\pi$ is noncrossing
\cite[Thm.~1]{Anshelevich:2000:freestochastic}
and from this it follows immediately that
$$
\ProdMeas_\pi
= \sum_{\substack{\sigma\in\NC\\ \sigma\geq\pi}}
  \StochMeas_\sigma
\qquad
\StochMeas_\pi
= \sum_{\substack{\sigma\in\NC\\ \sigma\geq\pi}}
  \mu(\pi,\sigma)\,\ProdMeas_\sigma
$$
Moreover, by \cite[Lemma~1]{Anshelevich:2000:freestochastic},
the expectation of a stochastic measure has
a simple expression in terms of cumulants of the original random variable $X$,
namely
$$
\phi(\StochMeas_\pi)=C_\pi(X)
.
$$
Concerning the joint distribution of $\psi_n$,
\cite[Prop.~4]{Anshelevich:2000:freestochastic}
tells us that
$$
\psi_{k_1}\psi_{k_2}\dotsm \psi_{k_m}
= \sum_{\substack{
          \sigma\in\NC(k_1+k_2+\dots+k_m)\\ 
          \sigma\wedge \hat{1}_{k_1}\hat{1}_{k_2}\dotsm\hat{1}_{k_m}=\hat{0}
  }}
  \StochMeas_\sigma
$$
where we recognize the $m$-partite partitions of Definition~\ref{def:mpartite}.
Altogether it follows that
\begin{equation}
  \label{eq:phipsik1...psikm}
  \phi(\psi_{k_1}\psi_{k_2}\dotsm \psi_{k_m})
  = \sum_{\substack{
                \sigma\in\NC(k_1+k_2+\dots+k_m)\\ 
                \sigma\wedge \hat{1}_{k_1}\hat{1}_{k_2}\dotsm\hat{1}_{k_m}=\hat{0}
              }}
     C_\sigma(X)
\end{equation}
An alternative inductive proof of this formula
is given in \cite{Mizuo:2003:noncommutative},
see also \cite{KempSpeicher:2007:strong} for an application to strong
Haagerup inequalities for so-called $R$-diagonal elements.

To conclude our proof of theorem~\ref{thm:almkvist}
let us from now on assume that $X$ is a standard semicircular element,
with $\phi(X^2)=1$.
Then
$$
C_\sigma(X)=
\begin{cases}
  1 & \sigma\in \NC_2\\
  0 & \sigma\not\in \NC_2
\end{cases}
$$
together with Theorem~\ref{thm:dimTmRRdG} implies that
\begin{equation}
  \label{eq:dimTm=phipsidm}
  \dim T^m(\RR_d^*)^G = \phi(\psi_d^m)
  .
\end{equation}
It remains to identify the distribution of $\psi_d$.
Here we use one more result of Anshelevich 
\cite[Prop.~5]{Anshelevich:2000:freestochastic}
which states that for a centered free infinite divisible
random variable we have the orthogonality relation
$$
\gamma(\psi_m\psi_n) = 
  \delta_{mn}\phi(\Delta_2)^n = \delta_{mn}C_2(X)^n
$$
and therefore $\psi_k$ can be identified
with the orthogonal polynomials of $X$,
which in the semicircular case are the Chebyshev
polynomials $U_n$ of the second kind and thus
\cite[Cor.~8]{Anshelevich:2000:freestochastic}
$$
\psi_n = X\psi_{n-1}-\psi_{n-2}
$$
i.e., $\psi_n=U_n(X)$.
Plugging this into \eqref{eq:dimTm=phipsidm} we obtain
\begin{align*}
  \dim T^m(\RR_d^*)^G
  &= \phi(U_d(X)^m)\\
  &= \int_{-2}^2 U_d(x)^m \sqrt{4-x^2}\,dx\\
  &= \frac{1}{\pi} 
     \int_{-\pi}^\pi
     \left(
       \frac{\sin(d+1)\theta}{\sin\theta}
     \right)^m
     \sin^2\theta\,d\theta
\end{align*}
by the standard substitution
$U_d(\cos \theta) = \frac{\sin(d+1)\theta}{\sin\theta}$.

\begin{Remark}
  If $d$ is even then the noncrossing $m$-partite
pair partitions are in bijection
with all $m$-partite noncrossing partitions without singletons on $md/2$
points via the thickening bijection illustrated in the following example:
$$
\begin{asy}
  size(0.3*textwidth);
  draw((1,0)--(1,h3));
  draw((2,0)--(2,h2));
  draw((3,0)--(3,h1));
  draw((4,0)--(4,h0));
  draw((5,0)--(5,h0));
  draw((6,0)--(6,h1));
  draw((7,0)--(7,h1));
  draw((8,0)--(8,h0));
  draw((9,0)--(9,h0));
  draw((10,0)--(10,h1));
  draw((11,0)--(11,h2));
  draw((12,0)--(12,h3));
  draw((1,h3)--(12,h3));
  draw((2,h2)--(11,h2));
  draw((3,h1)--(6,h1));
  draw((4,h0)--(5,h0));
  draw((7,h1)--(10,h1));
  draw((8,h0)--(9,h0));
  label("1",(1,0),S);
  label("1",(2,0),S);
  label("1",(3,0),S);
  label("1",(4,0),S);
  label("2",(5,0),S);
  label("2",(6,0),S);
  label("2",(7,0),S);
  label("2",(8,0),S);
  label("3",(9,0),S);
  label("3",(10,0),S);
  label("3",(11,0),S);
  label("3",(12,0),S);
\end{asy}
\longrightarrow
\begin{asy}
  size(0.3*textwidth);
  filldraw((1,0)--(1,h3)--(12,h3)--(12,0)--(11,0)--(11,h2)--(2,h2)--(2,0)--cycle,grey);

  filldraw((3,0)--(3,h1)--(6,h1)--(6,0)--(5,0)--(5,h0)--(4,h0)--(4,0)--cycle,grey);
  filldraw((7,0)--(7,h1)--(10,h1)--(10,0)--(9,0)--(9,h0)--(8,h0)--(8,0)--cycle,grey);

  label("1",(1,0),S);
  label("1",(2,0),S);
  label("1",(3,0),S);
  label("1",(4,0),S);
  label("2",(5,0),S);
  label("2",(6,0),S);
  label("2",(7,0),S);
  label("2",(8,0),S);
  label("3",(9,0),S);
  label("3",(10,0),S);
  label("3",(11,0),S);
  label("3",(12,0),S);
\end{asy}
\longrightarrow
\begin{asy}
  size(0.3*textwidth);
  draw((1,0)--(1,h1)--(6,h1)--(6,0));
  draw((2,0)--(2,h0)--(3,h0)--(3,0));  
  draw((4,0)--(4,h0)--(5,h0)--(5,0));
  label("1",(1,0),S);
  label("1",(2,0),S);
  label("2",(3,0),S);
  label("2",(4,0),S);
  label("3",(5,0),S);
  label("3",(6,0),S);
\end{asy}
$$
\end{Remark}


\emph{Acknowledgements.}
We thank Roland Speicher for bringing formula
\eqref{eq:phipsik1...psikm} to our attention.

\providecommand{\bysame}{\leavevmode\hbox to3em{\hrulefill}\thinspace}
\providecommand{\MR}{\relax\ifhmode\unskip\space\fi MR }
\providecommand{\MRhref}[2]{%
  \href{http://www.ams.org/mathscinet-getitem?mr=#1}{#2}
}
\providecommand{\href}[2]{#2}


\begin{thebibliography}{10}

\bibitem{Almkvist:1990:commutative}
Gert Almkvist, \emph{Commutative and noncommutative invariant theory}, Topics
  in algebra, Part 2 (Warsaw, 1988), Banach Center Publ., vol.~26, PWN, Warsaw,
  1990, pp.~259--268.

\bibitem{AlmkvistDicksFormanek:1985:Hilbert}
Gert Almkvist, Warren Dicks, and Edward Formanek, \emph{Hilbert series of fixed
  free algebras and noncommutative classical invariant theory}, J. Algebra
  \textbf{93} (1985), no.~1, 189--214.

\bibitem{Anshelevich:2000:freestochastic}
Michael Anshelevich, \emph{Free stochastic measures via noncrossing
  partitions}, Adv. Math. \textbf{155} (2000), no.~1, 154--179.

\bibitem{BenaychNechita:2008:permutation}
Florent Benaych-Georges and Ion Nechita, \emph{A permutation model for free
  random variables and its classical analogue}, preprint arXiv:0801.4229, to
  appear in Pac.~J.~Math., 2008.

\bibitem{DicksFormanek:1982:Poincare}
Warren Dicks and Edward Formanek, \emph{Poincar\'e series and a problem of {S}.
  {M}ontgomery}, Linear and Multilinear Algebra \textbf{12} (1982/83), no.~1,
  21--30.

\bibitem{Kharchenko:1978:algebras}
V.~K. Har{\v{c}}enko, \emph{Algebras of invariants of free algebras}, Algebra i
  Logika \textbf{17} (1978), no.~4, 478--487, 491.

\bibitem{KempSpeicher:2007:strong}
Todd Kemp and Roland Speicher, \emph{Strong {H}aagerup inequalities for free
  {$R$}-diagonal elements}, J. Funct. Anal. \textbf{251} (2007), no.~1,
  141--173.

\bibitem{Lehner:2004:cumulants1}
Franz Lehner, \emph{Cumulants in noncommutative probability theory. {I}.
  {N}oncommutative exchangeability systems}, Math. Z. \textbf{248} (2004),
  no.~1, 67--100.

\bibitem{Mizuo:2003:noncommutative}
Masaru Mizuo, \emph{Noncommutative {S}obolev spaces, {$C\sp \infty$} algebras
  and {S}chwartz distributions associated with semicircular systems}, Publ.
  Res. Inst. Math. Sci. \textbf{39} (2003), no.~2, 331--363.

\bibitem{NicaSpeicher:2006:lectures}
Alexandru Nica and Roland Speicher, \emph{Lectures on the combinatorics of free
  probability}, London Mathematical Society Lecture Note Series, vol. 335,
  Cambridge University Press, Cambridge, 2006.

\bibitem{NicaSpeicher:2006:lecture}
\bysame, \emph{Lectures on the combinatorics of free probability}, London
  Mathematical Society Lecture Note Series, vol. 335, Cambridge University
  Press, Cambridge, 2006. \MR{MR2266879}

\bibitem{RotaWallstrom:1997:stochastic}
Gian-Carlo Rota and Timothy~C. Wallstrom, \emph{Stochastic integrals: a
  combinatorial approach}, Ann. Probab. \textbf{25} (1997), no.~3, 1257--1283.

\bibitem{Sloane:Encyclopedia}
N.~Sloane, \emph{The on-line encyclopedia of integer sequences},
  \url{http://www.research.att.com/~njas/sequences/}.

\bibitem{Speicher:1994:multiplicative}
Roland Speicher, \emph{Multiplicative functions on the lattice of noncrossing
  partitions and free convolution}, Math. Ann. \textbf{298} (1994), no.~4,
  611--628.

\bibitem{Tambour:1991:noncommutative}
Torbj{\"o}rn Tambour, \emph{Noncommutative classical invariant theory}, Ark.
  Mat. \textbf{29} (1991), no.~1, 127--182.

\bibitem{Teranishi:1988:noncommutative}
Yasuo Teranishi, \emph{Noncommutative classical invariant theory}, Nagoya Math.
  J. \textbf{112} (1988), 153--169.

\bibitem{VoiculescuDykemaNica:1992:free}
D.~V. Voiculescu, K.~J. Dykema, and A.~Nica, \emph{Free random variables}, CRM
  Lecture Notes Series, vol.~1, American Mathematical Society, Providence, RI,
  1992.

\bibitem{Voiculescu:1985:symmetries}
Dan Voiculescu, \emph{Symmetries of some reduced free product {$C\sp
  \ast$}-algebras}, Operator algebras and their connections with topology and
  ergodic theory (Bu\c steni, 1983), Lecture Notes in Math., vol. 1132,
  Springer, Berlin, 1985, pp.~556--588.

\end{thebibliography}
\end{document}